\newcommand{\nc}{\newcommand}
\newcommand{\delete}[1]{}
\nc{\mfootnote}[1]{\footnote{#1}} 
\nc{\todo}[1]{\tred{To do:} #1}
\nc{\mlabel}[1]{\label{#1}}  
\nc{\mcite}[1]{\cite{#1}}  
\nc{\mref}[1]{\ref{#1}}  
\nc{\mbibitem}[1]{\bibitem{#1}} 
\nc{\mlabel}[1]{\label{#1}  
{\hfill \hspace{1cm}{\bf{{\ }\hfill(#1)}}}}
\nc{\mcite}[1]{\cite{#1}{{\bf{{\ }(#1)}}}}  
\nc{\mref}[1]{\ref{#1}{{\bf{{\ }(#1)}}}}  
\nc{\mbibitem}[1]{\bibitem[\bf #1]{#1}} 
\newtheorem{theorem}{Theorem}[section]
\newtheorem{definition}[theorem]{Definition}
\newtheorem{lemma}[theorem]{Lemma}
\newtheorem{prop-def}[theorem]{Proposition-Definition}
\newtheorem{remark}[theorem]{Remark}
\newtheorem{condition}[theorem]{Assumption}
\nc{\tred}[1]{\textcolor{red}{#1}}
\nc{\tblue}[1]{\textcolor{blue}{#1}}
\nc{\tgreen}[1]{\textcolor{green}{#1}}
\nc{\tpurple}[1]{\textcolor{purple}{#1}}
\nc{\btred}[1]{\textcolor{red}{\bf #1}}
\nc{\btblue}[1]{\textcolor{blue}{\bf #1}}
\nc{\btgreen}[1]{\textcolor{green}{\bf #1}}
\nc{\btpurple}[1]{\textcolor{purple}{\bf #1}}
\nc{\li}[1]{\textcolor{red}{Xiaomin:#1}}
\nc{\cm}[1]{\textcolor{blue}{Chengming: #1}}
\nc{\twovec}[2]{\left(\begin{array}{c} #1 \\ #2\end{array} \right )}
\nc{\threevec}[3]{\left(\begin{array}{c} #1 \\ #2 \\ #3 \end{array}\right )}
\nc{\twomatrix}[4]{\left(\begin{array}{cc} #1 & #2\\ #3 & #4 \end{array} \right)}
\nc{\threematrix}[9]{{\left(\begin{matrix} #1 & #2 & #3\\ #4 & #5 & #6 \\ #7 & #8 & #9 \end{matrix} \right)}}
\nc{\twodet}[4]{\left|\begin{array}{cc} #1 & #2\\ #3 & #4 \end{array} \right|}
\nc{\rk}{\mathrm{r}}
\nc{\gensp}{V} 
\nc{\relsp}{\Lambda} 
\nc{\leafsp}{X}    
\nc{\treesp}{\overline{\calt}} 
\nc{\vin}{{\mathrm Vin}}    
\nc{\lin}{{\mathrm Lin}}    
\nc{\gop}{{\,\omega\,}}     
\nc{\gopb}{{\,\nu\,}}
\nc{\svec}[2]{{\tiny\left(\begin{matrix}#1\\
#2\end{matrix}\right)\,}}  
\nc{\ssvec}[2]{{\tiny\left(\begin{matrix}#1\\
#2\end{matrix}\right)\,}} 
\nc{\su}{\mathrm{Su}}
\nc{\tsu}{\mathrm{TSu}}
\nc{\TSu}{\mathrm{TSu}}
\nc{\eval}[1]{{#1}_{\big|D}}
\nc{\oto}{\leftrightarrow}
\nc{\oaset}{\mathbf{O}^{\rm alg}}
\nc{\omset}{\mathbf{O}^{\rm mod}}
\nc{\oamap}{\Phi^{\rm alg}}
\nc{\ommap}{\Phi^{\rm mod}}
\nc{\ioaset}{\mathbf{IO}^{\rm alg}}
\nc{\iomset}{\mathbf{IO}^{\rm mod}}
\nc{\ioamap}{\Psi^{\rm alg}}
\nc{\iommap}{\Psi^{\rm mod}}
\nc{\suc}{{successor}\xspace} \nc{\Suc}{{Successor}\xspace}
\nc{\sucs}{{successors}\xspace} \nc{\Sucs}{{Successors}\xspace}
\nc{\Tsuc}{{T-successor}\xspace}
\nc{\Tsucs}{{T-successors}\xspace} \nc{\Lsuc}{{L-successor}\xspace}
\nc{\Lsucs}{{L-successors}\xspace} \nc{\Rsuc}{{R-successor}\xspace}
\nc{\Rsucs}{{R-successors}\xspace}
\nc{\bia}{{$\mathcal{P}$-bimodule ${\bf k}$-algebra}\xspace}
\nc{\bias}{{$\mathcal{P}$-bimodule ${\bf k}$-algebras}\xspace}
\nc{\rmi}{{\mathrm{I}}}
\nc{\rmii}{{\mathrm{II}}}
\nc{\rmiii}{{\mathrm{III}}}
\nc{\pll}{\beta}
\nc{\plc}{\epsilon}
\nc{\ass}{{\mathit{Ass}}}
\nc{\lie}{{\mathit{Lie}}}
\nc{\comm}{{\mathit{Comm}}}
\nc{\dend}{{\mathit{Dend}}}
\nc{\zinb}{{\mathit{Zinb}}}
\nc{\tdend}{{\mathit{TDend}}}
\nc{\prelie}{{\mathit{preLie}}}
\nc{\postlie}{{\mathit{PostLie}}}
\nc{\quado}{{\mathit{Quad}}}
\nc{\octo}{{\mathit{Octo}}}
\nc{\ldend}{{\mathit{ldend}}}
\nc{\lquad}{{\mathit{LQuad}}}
 \nc{\adec}{\check{;}} \nc{\aop}{\alpha}
\nc{\dftimes}{\widetilde{\otimes}} \nc{\dfl}{\succ} \nc{\dfr}{\prec}
\nc{\dfc}{\circ} \nc{\dfb}{\bullet} \nc{\dft}{\star}
\nc{\dfcf}{{\mathbf k}} \nc{\apr}{\ast} \nc{\spr}{\cdot}
\nc{\twopr}{\circ} \nc{\tspr}{\star} \nc{\sempr}{\ast}
\nc{\disp}[1]{\displaystyle{#1}}
\nc{\bin}[2]{ (_{\stackrel{\scs{#1}}{\scs{#2}}})}  
\nc{\binc}[2]{ \left (\!\! \begin{array}{c} \scs{#1}\\
    \scs{#2} \end{array}\!\! \right )}  
\nc{\bincc}[2]{  \left ( {\scs{#1} \atop
    \vspace{-.5cm}\scs{#2}} \right )}  
\nc{\sarray}[2]{\begin{array}{c}#1 \vspace{.1cm}\\ \hline
    \vspace{-.35cm} \\ #2 \end{array}}
\nc{\bs}{\bar{S}} \nc{\dcup}{\stackrel{\bullet}{\cup}}
\nc{\dbigcup}{\stackrel{\bullet}{\bigcup}} \nc{\etree}{\big |}
\nc{\la}{\longrightarrow} \nc{\fe}{\'{e}} \nc{\rar}{\rightarrow}
\nc{\dar}{\downarrow} \nc{\dap}[1]{\downarrow
\rlap{$\scriptstyle{#1}$}} \nc{\uap}[1]{\uparrow
\rlap{$\scriptstyle{#1}$}} \nc{\defeq}{\stackrel{\rm def}{=}}
\nc{\dis}[1]{\displaystyle{#1}} \nc{\dotcup}{\,
\displaystyle{\bigcup^\bullet}\ } \nc{\sdotcup}{\tiny{
\displaystyle{\bigcup^\bullet}\ }} \nc{\hcm}{\ \hat{,}\ }
\nc{\hcirc}{\hat{\circ}} \nc{\hts}{\hat{\shpr}}
\nc{\lts}{\stackrel{\leftarrow}{\shpr}}
\nc{\rts}{\stackrel{\rightarrow}{\shpr}} \nc{\lleft}{[}
\nc{\lright}{]} \nc{\uni}[1]{\tilde{#1}} \nc{\wor}[1]{\check{#1}}
\nc{\free}[1]{\bar{#1}} \nc{\den}[1]{\check{#1}} \nc{\lrpa}{\wr}
\nc{\curlyl}{\left \{ \begin{array}{c} {} \\ {} \end{array}
    \right .  \!\!\!\!\!\!\!}
\nc{\curlyr}{ \!\!\!\!\!\!\!
    \left . \begin{array}{c} {} \\ {} \end{array}
    \right \} }
\nc{\leaf}{\ell}       
\nc{\longmid}{\left | \begin{array}{c} {} \\ {} \end{array}
    \right . \!\!\!\!\!\!\!}
\nc{\ot}{\otimes} \nc{\sot}{{\scriptstyle{\ot}}}
\nc{\otm}{\overline{\ot}}
\nc{\ora}[1]{\stackrel{#1}{\rar}}
\nc{\ola}[1]{\stackrel{#1}{\la}}
\nc{\pltree}{\calt^\pl}
\nc{\epltree}{\calt^{\pl,\NC}}
\nc{\rbpltree}{\calt^r}
\nc{\scs}[1]{\scriptstyle{#1}} \nc{\mrm}[1]{{\rm #1}}
\nc{\dirlim}{\displaystyle{\lim_{\longrightarrow}}\,}
\nc{\invlim}{\displaystyle{\lim_{\longleftarrow}}\,}
\nc{\mvp}{\vspace{0.5cm}} \nc{\svp}{\vspace{2cm}}
\nc{\vp}{\vspace{8cm}} \nc{\proofbegin}{\noindent{\bf Proof: }}
\nc{\proofend}{$\blacksquare$ \vspace{0.5cm}}
\nc{\freerbpl}{{F^{\mathrm RBPL}}}
\nc{\sha}{{\mbox{\cyr X}}}  
\nc{\ncsha}{{\mbox{\cyr X}^{\mathrm NC}}} \nc{\ncshao}{{\mbox{\cyr
X}^{\mathrm NC,\,0}}}
\nc{\shpr}{\diamond}    
\nc{\shprm}{\overline{\diamond}}    
\nc{\shpro}{\diamond^0}    
\nc{\shprr}{\diamond^r}     
\nc{\shpra}{\overline{\diamond}^r}
\nc{\shpru}{\check{\diamond}} \nc{\catpr}{\diamond_l}
\nc{\rcatpr}{\diamond_r} \nc{\lapr}{\diamond_a}
\nc{\sqcupm}{\ot}
\nc{\lepr}{\diamond_e} \nc{\vep}{\varepsilon} \nc{\labs}{\mid\!}
\nc{\rabs}{\!\mid} \nc{\hsha}{\widehat{\sha}}
\nc{\lsha}{\stackrel{\leftarrow}{\sha}}
\nc{\rsha}{\stackrel{\rightarrow}{\sha}} \nc{\lc}{\lfloor}
\nc{\rc}{\rfloor}
\nc{\tpr}{\sqcup}
\nc{\nctpr}{\vee}
\nc{\plpr}{\star}
\nc{\rbplpr}{\bar{\plpr}}
\nc{\sqmon}[1]{\langle #1\rangle}
\nc{\forest}{\calf}
\nc{\altx}{\Lambda_X} \nc{\vecT}{\vec{T}} \nc{\onetree}{\bullet}
\nc{\Ao}{\check{A}}
\nc{\seta}{\underline{\Ao}}
\nc{\deltaa}{\overline{\delta}}
\nc{\trho}{\tilde{\rho}}
\nc{\rpr}{\circ}
\nc{\dpr}{{\tiny\diamond}}
\nc{\rprpm}{{\rpr}}
\nc{\mmbox}[1]{\mbox{\ #1\ }} \nc{\ann}{\mrm{ann}}
\nc{\Aut}{\mrm{Aut}} \nc{\can}{\mrm{can}}
\nc{\twoalg}{{two-sided algebra}\xspace}
\nc{\colim}{\mrm{colim}}
\nc{\Cont}{\mrm{Cont}} \nc{\rchar}{\mrm{char}}
\nc{\cok}{\mrm{coker}} \nc{\dtf}{{R-{\rm tf}}} \nc{\dtor}{{R-{\rm
tor}}}
\nc{\depth}{{\mrm d}}
\nc{\Div}{{\mrm Div}} \nc{\End}{\mrm{End}} \nc{\Ext}{\mrm{Ext}}
\nc{\Fil}{\mrm{Fil}} \nc{\Frob}{\mrm{Frob}} \nc{\Gal}{\mrm{Gal}}
\nc{\GL}{\mrm{GL}} \nc{\Hom}{\mrm{Hom}} \nc{\hsr}{\mrm{H}}
\nc{\hpol}{\mrm{HP}} \nc{\id}{\mrm{id}} \nc{\im}{\mrm{im}}
\nc{\incl}{\mrm{incl}} \nc{\length}{\mrm{length}}
\nc{\LR}{\mrm{LR}} \nc{\mchar}{\rm char} \nc{\NC}{\mrm{NC}}
\nc{\mpart}{\mrm{part}} \nc{\pl}{\mrm{PL}}
\nc{\ql}{{\QQ_\ell}} \nc{\qp}{{\QQ_p}}
\nc{\rank}{\mrm{rank}} \nc{\rba}{\rm{RBA }} \nc{\rbas}{\rm{RBAs }}
\nc{\rbpl}{\mrm{RBPL}}
\nc{\rbw}{\rm{RBW }} \nc{\rbws}{\rm{RBWs }} \nc{\rcot}{\mrm{cot}}
\nc{\rest}{\rm{controlled}\xspace}
\nc{\rdef}{\mrm{def}} \nc{\rdiv}{{\rm div}} \nc{\rtf}{{\rm tf}}
\nc{\rtor}{{\rm tor}} \nc{\res}{\mrm{res}} \nc{\SL}{\mrm{SL}}
\nc{\Spec}{\mrm{Spec}} \nc{\tor}{\mrm{tor}} \nc{\Tr}{\mrm{Tr}}
\nc{\mtr}{\mrm{sk}}
\nc{\ab}{\mathbf{Ab}} \nc{\Alg}{\mathbf{Alg}}
\nc{\Algo}{\mathbf{Alg}^0} \nc{\Bax}{\mathbf{Bax}}
\nc{\Baxo}{\mathbf{Bax}^0} \nc{\RB}{\mathbf{RB}}
\nc{\RBo}{\mathbf{RB}^0} \nc{\BRB}{\mathbf{RB}}
\nc{\Dend}{\mathbf{DD}} \nc{\bfk}{{\bf k}} \nc{\bfone}{{\bf 1}}
\nc{\base}[1]{{a_{#1}}} \nc{\detail}{\marginpar{\bf More detail}
    \noindent{\bf Need more detail!}
    \svp}
\nc{\Diff}{\mathbf{Diff}} \nc{\gap}{\marginpar{\bf
Incomplete}\noindent{\bf Incomplete!!}
    \svp}
\nc{\FMod}{\mathbf{FMod}} \nc{\mset}{\mathbf{MSet}}
\nc{\rb}{\mathrm{RB}} \nc{\Int}{\mathbf{Int}}
\nc{\Mon}{\mathbf{Mon}}
\nc{\remarks}{\noindent{\bf Remarks: }}
\nc{\OS}{\mathbf{OS}} 
\nc{\Rep}{\mathbf{Rep}}
\nc{\Rings}{\mathbf{Rings}} \nc{\Sets}{\mathbf{Sets}}
\nc{\DT}{\mathbf{DT}}
\nc{\BA}{{\mathbb A}} \nc{\CC}{{\mathbb C}} \nc{\DD}{{\mathbb D}}
\nc{\EE}{{\mathbb E}} \nc{\FF}{{\mathbb F}} \nc{\GG}{{\mathbb G}}
\nc{\HH}{{\mathbb H}} \nc{\LL}{{\mathbb L}} \nc{\NN}{{\mathbb N}}
\nc{\QQ}{{\mathbb Q}} \nc{\RR}{{\mathbb R}} \nc{\BS}{{\mathbb{S}}} \nc{\TT}{{\mathbb T}}
\nc{\VV}{{\mathbb V}} \nc{\ZZ}{{\mathbb Z}}
\nc{\calao}{{\mathcal A}} \nc{\cala}{{\mathcal A}}
\nc{\calc}{{\mathcal C}} \nc{\cald}{{\mathcal D}}
\nc{\cale}{{\mathcal E}} \nc{\calf}{{\mathcal F}}
\nc{\calfr}{{{\mathcal F}^{\,r}}} \nc{\calfo}{{\mathcal F}^0}
\nc{\calfro}{{\mathcal F}^{\,r,0}} \nc{\oF}{\overline{F}}
\nc{\calg}{{\mathcal G}} \nc{\calh}{{\mathcal H}}
\nc{\cali}{{\mathcal I}} \nc{\calj}{{\mathcal J}}
\nc{\call}{{\mathcal L}} \nc{\calm}{{\mathcal M}}
\nc{\caln}{{\mathcal N}} \nc{\calo}{{\mathcal O}}
\nc{\calp}{{\mathcal P}} \nc{\calq}{{\mathcal Q}} \nc{\calr}{{\mathcal R}}
\nc{\calt}{{\mathcal T}} \nc{\caltr}{{\mathcal T}^{\,r}}
\nc{\calu}{{\mathcal U}} \nc{\calv}{{\mathcal V}}
\nc{\calw}{{\mathcal W}} \nc{\calx}{{\mathcal X}}
\nc{\CA}{\mathcal{A}}
\nc{\fraka}{{\mathfrak a}} \nc{\frakB}{{\mathfrak B}}
\nc{\frakb}{{\mathfrak b}} \nc{\frakd}{{\mathfrak d}}
\nc{\oD}{\overline{D}}
\nc{\frakF}{{\mathfrak F}} \nc{\frakg}{{\mathfrak g}}
\nc{\frakm}{{\mathfrak m}} \nc{\frakM}{{\mathfrak M}}
\nc{\frakMo}{{\mathfrak M}^0} \nc{\frakp}{{\mathfrak p}}
\nc{\frakS}{{\mathfrak S}} \nc{\frakSo}{{\mathfrak S}^0}
\nc{\fraks}{{\mathfrak s}} \nc{\os}{\overline{\fraks}}
\nc{\frakT}{{\mathfrak T}}
\nc{\oT}{\overline{T}}
\nc{\frakX}{{\mathfrak X}} \nc{\frakXo}{{\mathfrak X}^0}
\nc{\frakx}{{\mathbf x}}
\nc{\frakTx}{\frakT}      
\nc{\frakTa}{\frakT^a}        
\nc{\frakTxo}{\frakTx^0}   
\nc{\caltao}{\calt^{a,0}}   
\nc{\ox}{\overline{\frakx}} \nc{\fraky}{{\mathfrak y}}
\nc{\frakz}{{\mathfrak z}} \nc{\oX}{\overline{X}}
\font\cyr=wncyr10
\nc{\redtext}[1]{\textcolor{red}{#1}}
\g@addto@macro{\endabstract}{\@setabstract}
\newcommand{\authorfootnotes}{\renewcommand\thefootnote{\@fnsymbol\c@footnote}}%
\begin{document}
\begin{center}
  \LARGE
\textbf{Biderivations of finite dimensional complex simple Lie algebras }

  \normalsize
  \authorfootnotes
Xiaomin Tang   \footnote{Corresponding author: {\it X. Tang. Email:} x.m.tang@163.com}
\par \bigskip

   \textsuperscript{1} Department of Mathematics, Heilongjiang University,
Harbin, 150080, P. R. China   \par

\end{center}


\begin{abstract}

In this paper, we prove that a biderivation of a finite dimensional complex simple Lie algebra without the restriction of skewsymmetric is inner. As an application, the biderivation of a general linear Lie algebra is presented. In particular, we find a class of a non-inner and non-skewsymmetric biderivations. Furthermore, we also get the forms of linear commuting maps on the finite dimensional complex simple Lie algebra or general linear Lie algebra.

\vspace{2mm}

\noindent{\it Keywords:} biderivation, simple Lie algebra, general linear Lie algebra, linear commuting map

\noindent{\it AMS subject classifications:} 17B05, 17B40, 17B65.

\end{abstract}

\setcounter{section}{0}
{\ }

 \baselineskip=20pt

\section{Introduction and  preliminary results}

Derivations and generalized derivations are very important subjects in the
study of both algebras and their generalizations. In recent years, biderivations have aroused the interest of a great many authors, see  \cite{Bre1995,Ben2009,Chen2016,Du2013,Gho2013,Hanw,WD1,WD3,WD2,Xu2015}. In \cite{Bre1995}, Bre\v{s}ar et. al. showed that all biderivations on
commutative prime rings are inner biderivations, and determined the biderivations of semiprime
rings. The notation of biderivations of Lie algebras was introduced in \cite{WD3}. In addition, Wang et. al. proved that the skewsymmetric biderivations of the Schr\"{o}dinger-Virasoro algebra are inner biderivations \cite{WD1}. Furthermore, Xia et. al. showed that the skewsymmetric super-biderivations on the super-Virasoro Algebras are inner  \cite{WD2}. In \cite{Chen2016}, Chen pointed out every skewsymmetric biderivation of a simple generalized Witt algebra over a field of characteristic zero is also inner. In \cite{Hanw}, Han et. al. determined all the skewsymmetric biderivations of $W(a,b)$ and found that there exist non-inner biderivations. It may be useful and interesting for computing the biderivations of some important Lie (super-)algebras. Of course, we should first consider the finite dimensional complex simple Lie algebra. In fact,  Wang et. al. showed that the skewsymmetric biderivation of a finite dimensional complex simple Lie algebra is inner \cite{WD2}, but the problem about the general biderivation  (without the restriction of skewsymmetric) is still open. We will study this problem in this paper.

For an arbitrary Lie algebra $L$, we recall that a bilinear map $f : L\times L \rightarrow L$ is a
biderivation of $L$ if it is a derivation with respect to both components. More precisely, one has
\begin{definition}
Suppose that $L$ is a Lie algebra. A bilinear map $f: L\times L\rightarrow L$ is called a biderivation if it satisfies
\begin{eqnarray}
f([x,y],z)=[x,f(y,z)]+[f(x,z),y], \label{2der}\\
f(x,[y,z])=[f(x,y),z]+[y,f(x,z)] \label{1der}
\end{eqnarray}
for all $x, y, z\in L$.
\end{definition}
Let $\lambda\in \mathbb{C}$, the bilinear map $f: L\times L\rightarrow L$ sending $(x,y)$ to $\lambda [x,y]$, is a biderivation of $L$. It will be called an inner biderivation of $L$.
It is well known that every derivation of a finite dimensional complex  simple Lie algebra is inner. There is a natural question: what is the biderivation of a finite dimensional complex simple Lie algebra?  It seems that this question has already answered by \cite{WD3}, i.e., every biderivation of a finite dimensional complex simple Lie algebra is inner. The following assumption is a pivotal Lemma in \cite{WD3} (see Lemma 2.2 of  \cite{WD3}).

\begin{condition}\cite{WD3}\label{zhangd2011}
Let $f$ be a biderivation of a Lie algebra $L$. Then one has
\begin{equation}\label{fourel}
[f(x,y),[u,v]]=[[x,y],f(u,v)], \ \ \forall x,y,u,v\in L.
\end{equation}
\end{condition}

Unfortunately, we see that Assumption  \ref{zhangd2011} is invalid by a simple computation. In fact,
since $f$ is a derivation in the first argument, it can be seen that
$$
f([x,u],[y,v])=[f(x,[y,v]),u]+[x,f(u,[y,v])].
$$
Using the fact that $f$ is a derivation in the second argument, one has
$$
f([x,u],[y,v])=[[f(x,y),v],u]+[[y,f(x,v)],u]+[x,[f(u,y),v]]+[x,[y,f(u,v)]].
$$
On the other hand, computing $f([x,u],[y,v])$ in a different way, it shows
$$
f([x,u],[y,v])=[[x,f(u,y)],v]+[[f(x,y),u],v]+[y,[f(x,v),u]]+[y,[x,f(u,v)]].
$$
By comparing the above two equations, we have
\begin{equation}\label{eight}
[f(x,y),[u,v]]+[[x,v],f(u,y)]=[[x,y],f(u,v)]+[f(x,v),[u,y]], \ \ \forall x,y,u,v\in L.
\end{equation}
Obviously, Eqs. (\ref{fourel}) and (\ref{eight})  are different. So the pivotal Lemma 2.2 in \cite{WD3} does not work. This tells us that
the problem about characterization of biderivation of a finite  dimensional complex simple Lie algebra is still open. In this paper, we work under the complex number field, and this field  also works with any algebraically closed field of characteristic zero.

It should be point out that Assumption \ref{zhangd2011} still holds if the biderivation $f$ is skewsymmetric (that is $f(x,y)=-f(y,x)$ for all $x,y\in L$). This result was given by Lemma 2.3 in \cite{Chen2016}.  However, the results of \cite{WD3} still hold if the biderivations are skewsymmetric. It is easy to check that Xu et. al. also forget the assumption of super-skewsymmetric   \cite{Xu2015}. Besides that, about this class of problems on biderivation of Lie (super-)algebra, $f$ is supposed to be (super-)skewsymmetric and applied the conclusion of Assumption \ref{zhangd2011} \cite{Chen2016,fan2016,Hanw,WD1,WD2}. In this paper, the assumption of skewsymmetric will not be applied.

In this paper, the notation concerning Lie algebras follows mainly from \cite{Car,Hum}.
Let $\mathbb{C}$ be the complex number field, $L$ a simple Lie algebra
over $\mathbb{C}$ of rank $l$, $\eta$ a fixed Cartan subalgebra of $L$, $\Phi \subseteq \eta^*$ the corresponding root
system of $L$, $\Delta$ a fixed base of $\Phi$, $\Phi_+$ (resp., $\Phi_-$) the set of positive (resp., negative)
roots relative to $\Delta$. The roots in $\Delta$ are called simple. Let
$L=\eta \oplus\left(\oplus_{\beta \in \Phi} L_\beta\right)$  be the root space decomposition of $L$, where
$L_{\beta}=\{x\in L| [h,x]= \beta(h)x, \ \forall h\in \eta \}$.
For each $\alpha\in \Phi$, take nonzero vectors $x_\alpha\in L_\alpha, h_\alpha\in \eta$
such that $x_{\alpha}, h_{\alpha}, x_{-\alpha}$
span a three-dimensional simple subalgebra of $L$ isomorphic to $sl_2(\mathbb{C})$, that is
$[x_\alpha, x_{-\alpha}]=h_{\alpha}$, $[h_\alpha,x_\alpha]=2x_\alpha$ and $[h_\alpha,x_{-\alpha}]=-2x_{-\alpha}$. The set
$\{h_{\alpha},x_\beta| \alpha \in \Delta, \beta\in \Phi \}$ forms a basis of $L$. For convenience, a new definition can be given as follows. That is, two roots $\alpha, \beta\in \Phi$ are called to be connected
if there are some $\gamma_1, \gamma_2,\cdots, \gamma_k\in \Phi$ satisfying $\alpha+\gamma_1, \gamma_1+\gamma_2, \cdots, \gamma_k+\beta\in \Phi\cup \{0\}$.
Obviously, the connected relation is an equivalence relation on $\Phi$. The following result is easy to prove and it will be useful in our main proof.

\begin{lemma}\label{connected}
Any two roots of a finite dimensional complex simple Lie algebra are connected.
\end{lemma}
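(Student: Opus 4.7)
The plan is to reinterpret the assertion as a graph-connectivity statement and then exploit the irreducibility of the root system. Define an auxiliary graph $G$ on the vertex set $\Phi$ by declaring $\alpha\sim\beta$ whenever $\alpha+\beta\in \Phi\cup\{0\}$. Unwinding the definition above, two roots are ``connected'' in the sense of the paper precisely when they lie in the same connected component of $G$. Two types of edges are free: $\alpha\sim -\alpha$ for every $\alpha\in\Phi$ (the sum is $0$), and $\alpha\sim\beta$ whenever $\alpha+\beta$ happens to be a root.

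First I would show that the simple roots all lie in a single component $C$ of $G$. Because $L$ is simple, its Dynkin diagram is connected. If two distinct simple roots $\alpha_i,\alpha_j$ are joined in the Dynkin diagram, then the Killing-form pairing satisfies $(\alpha_i,\alpha_j)<0$, and a standard fact about root systems (see \cite{Hum}) forces $\alpha_i+\alpha_j\in\Phi$; hence $\alpha_i\sim\alpha_j$ in $G$. Walking along the Dynkin diagram then produces a path in $G$ between any two simple roots.

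Next I would show by induction on the height $h$ of $\beta\in\Phi_+$ that $\beta\in C$. The base $h=1$ is immediate. For $h\geq 2$ it is standard that $\beta-\alpha_i\in\Phi_+$ for some simple root $\alpha_i$; set $\beta':=\beta-\alpha_i$, which lies in $C$ by induction. Then $\beta+(-\beta')=\alpha_i\in\Phi$ gives $\beta\sim -\beta'$ in $G$, and $-\beta'\sim\beta'\in C$ since $(-\beta')+\beta'=0$, so $\beta\in C$ as required. Finally, for any negative root $-\gamma$, the adjacency $-\gamma\sim\gamma$ places it in $C$ as well. Thus every root lies in the single component $C$, which is exactly the lemma.

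The only subtlety is that the induction step does not directly give an edge $\beta\sim\beta'$; one has to route the path through $-\beta'$, using both types of ``free'' edges in the same step. Beyond that, the proof is just an assembly of two textbook facts about irreducible root systems, namely the negativity of the pairing between distinct adjacent simple roots (which upgrades to $\alpha_i+\alpha_j\in\Phi$) and the ability to strip a simple root off any positive root of height at least two.
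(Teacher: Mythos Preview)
Your argument is correct. It is also genuinely different from the paper's own proof, so a brief comparison is in order.

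The paper proves the lemma by brute force: it writes out the explicit root system for each of the types $A_l,B_l,C_l,D_l,E_6,E_7,E_8,F_4,G_2$ and then asserts that one may check, type by type, that any two roots are connected. No uniform mechanism is given; the proof is essentially an appeal to a finite verification.

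Your approach, by contrast, is type-independent. You observe that the relation ``connected'' is exactly path-connectedness in the graph $G$ on $\Phi$ with edges $\alpha\sim\beta\Leftrightarrow\alpha+\beta\in\Phi\cup\{0\}$, and then you use two standard structural facts about irreducible root systems: (i) distinct simple roots adjacent in the Dynkin diagram have negative inner product, hence their sum is a root, so the connectedness of the Dynkin diagram puts all simple roots into one $G$-component; and (ii) any positive root of height $\geq 2$ admits a simple root that can be subtracted while remaining in $\Phi_+$, which drives the induction on height. The detour through $-\beta'$ (or equivalently through $-\alpha_i$, since $\beta+(-\alpha_i)=\beta'\in\Phi$ gives $\beta\sim -\alpha_i\sim\alpha_i\in C$ directly) is a nice use of the ``free'' edge $\gamma\sim -\gamma$.

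What each approach buys: the paper's case-by-case check is elementary in the sense of requiring no theory beyond the explicit root lists, but it is opaque and does not explain \emph{why} the lemma holds. Your argument is shorter, conceptual, and makes clear that the lemma is really a reformulation of the irreducibility of $\Phi$; it also extends without change to any irreducible root system over any field, not just the classical complex simple Lie algebras.
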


\begin{proof}
Let $V$ be a real vector space of dimension $l$ with basis $\beta_1, \cdots, \beta_l$.
Let the symmetric scalar product $(,)$ be defined by $(\beta_i,\beta_j)=\delta_{i,j}$. The full root system $\Phi$ for each finite dimensional complex simple Lie algebra is well known (see \cite{Car}) and it can be listed as: 

$A_{l}:$ $\Phi=\{\beta_i-\beta_j| i,j=1,2, \cdots, l+1, i\neq j\}$, $(l\ge 1)$,

$B_{l}: $ $\Phi=\{\pm \beta_i\pm \beta_j, \pm \beta_i| i,j=1, \cdots, l,  i\neq j\}$, $(l\ge 2)$,

$C_{l}: $ $\Phi=\{\pm \beta_i\pm \beta_j, \pm 2\beta_i|  i,j=1, \cdots, l, i\neq j\}$, $(l\ge 3)$,

$D_{l}: $ $\Phi=\{\pm \beta_i\pm \beta_j |i,j=1, \cdots, l, i\neq j\}$, $(l\ge 4)$,

$E_6:$ $\Phi=\{\pm \beta_i\pm \beta_j | i,j=1,\cdots,5, i\neq j \}\cup
\{\frac{1}{2}\sum \limits_{i=1}^8 \epsilon_i\beta_i| \epsilon_i=\pm 1, \prod \limits_{i=1}^8 \epsilon_i=1, \epsilon_6=\epsilon_7=\epsilon_8\}$,

$E_7:$ $\Phi=\{\pm \beta_i\pm \beta_j |  i,j=1,\cdots,6, i\neq j\}\cup\{\pm (\beta_7+\beta_8)\}\cup
\{\frac{1}{2}\sum \limits_{i=1}^8 \epsilon_i\beta_i| \epsilon_i=\pm 1, \prod \limits_{i=1}^8 \epsilon_i=1, \epsilon_7=\epsilon_8\}$,

$E_8:$ $\Phi=\{\pm \beta_i\pm \beta_j | i,j=1,\cdots,8,i\neq j\}\cup
\{\frac{1}{2}\sum \limits_{i=1}^8 \epsilon_i\beta_i|  \epsilon_i=\pm 1,\prod \limits_{i=1}^8 \epsilon_i=1\}$,

$F_4:$ $\Phi=\{\pm \beta_i\pm \beta_j, \pm \beta_i| i,j=1,2,3,4, i\neq j\} \cup
\{\frac{1}{2}\sum \limits_{i=1}^4 \epsilon_i\beta_i| \epsilon_i=\pm 1\}$,

$G_2:$ $\Phi=\{\pm(-2\beta_i+\beta_j+\beta_k), \pm(\beta_i-\beta_j)| i,j,k=1,2,3 , i\neq j, j\neq k, i\neq k\}$.

Nextly, we can verify one by one that any two roots of one designated simple Lie algebra of types
$A_{l}$, $B_l$, $C_l$, $D_l$, $E_6$, $ E_7$, $E_8$, $F_4$, $G_2$ are connected.
\end{proof}

Now let us recall the definition of the derivation of a Lie algebra as follows.
\begin{definition}
Suppose that $L$ is a Lie algebra. A linear map $\phi: L\rightarrow L$ is called a derivation if it satisfies
\begin{eqnarray}
\phi([x,y])=[\phi(x),y]+[x,\phi(y)]
\end{eqnarray}
for all $x, y\in L$.
\end{definition}

For $x\in L$, it is easy to see that $\phi_x:L\rightarrow L, y\mapsto {\rm ad} x(y)=[x,y], $ for all $y\in L$ is a derivation of $L$.
It is called an inner derivation. The following result is well known.

\begin{lemma}\cite{Hum}\label{innerLie}
Every derivation of a finite dimensional complex simple Lie algebra is inner.
\end{lemma}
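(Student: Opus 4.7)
The plan is to establish the classical identification $\mathrm{Der}(L)=\mathrm{ad}(L)$ by exploiting the non-degeneracy of the Killing form on the semisimple Lie algebra $L$. Write $D=\mathrm{Der}(L)$ and $M=\mathrm{ad}(L)$. A direct computation shows $[\delta,\mathrm{ad}_x]=\mathrm{ad}_{\delta(x)}$ for every $\delta\in D$ and $x\in L$, so $M$ is an ideal of $D$. Moreover, since $L$ is simple its center is trivial, so $\mathrm{ad}\colon L\to M$ is a Lie algebra isomorphism, and it suffices to prove $D=M$.

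The crucial observation is that the Killing form $\kappa_D$ of $D$, restricted to $M\times M$, coincides with the Killing form $\kappa_M$ of $M$. Indeed, for $m,m'\in M$ the operator $\mathrm{ad}_D(m)\,\mathrm{ad}_D(m')$ sends $D$ into $M$ because $M$ is an ideal, so computing its trace in a basis of $D$ extending a basis of $M$ reduces the trace on $D$ to the trace on $M$. Transported back via $\mathrm{ad}$ this is the Killing form $\kappa_L$ of $L$, which is non-degenerate by Cartan's criterion for semisimple Lie algebras. Hence $\kappa_D\bigl|_{M\times M}$ is non-degenerate.

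Now define the orthogonal complement $M^{\perp}=\{\delta\in D : \kappa_D(\delta,M)=0\}$. Invariance of $\kappa_D$ together with $M$ being an ideal forces $M^{\perp}$ to be an ideal of $D$. The preceding paragraph gives $M\cap M^{\perp}=0$, and the linear map $D\to M^{*}$, $d\mapsto\kappa_D(d,\cdot)\bigl|_M$, is surjective (its restriction to $M$ is already a bijection, by non-degeneracy), so $\dim M^{\perp}=\dim D-\dim M$; hence $D=M\oplus M^{\perp}$. Finally, for any $\delta\in M^{\perp}$ and $x\in L$ we have $\mathrm{ad}_{\delta(x)}=[\delta,\mathrm{ad}_x]\in[M^{\perp},M]\subseteq M\cap M^{\perp}=0$, and injectivity of $\mathrm{ad}$ gives $\delta(x)=0$ for every $x$. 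Therefore $\delta=0$, so $M^{\perp}=0$ and $D=M=\mathrm{ad}(L)$.

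The main technical point is the identification of the two Killing forms in the second paragraph; once that equality is in hand, everything else is linear-algebraic bookkeeping. Since the statement is quoted as a classical fact from \cite{Hum}, one would in practice simply refer the reader there rather than spell out the trace computation in detail.
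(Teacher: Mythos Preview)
The paper does not give its own proof of this lemma; it simply quotes the result from Humphreys' textbook \cite{Hum} and moves on. Your argument is correct and is precisely the classical proof one finds there: $\mathrm{ad}(L)$ is an ideal of $\mathrm{Der}(L)$, the Killing form of $\mathrm{Der}(L)$ restricts to the (non-degenerate) Killing form of $\mathrm{ad}(L)\cong L$, so the orthogonal complement $M^\perp$ satisfies $M\cap M^\perp=0$, and then $[\delta,\mathrm{ad}_x]=\mathrm{ad}_{\delta(x)}\in M\cap M^\perp=0$ forces $M^\perp=0$. Your handling of the dimension count via the surjection $D\to M^*$ is a clean way to avoid assuming non-degeneracy of $\kappa_D$ on all of $D$, which is a subtlety sometimes glossed over. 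As you yourself note in the last paragraph, for the purposes of this paper a bare citation suffices, and that is exactly what the author does.
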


\section{Biderivations  of finite dimensional complex simple Lie algebras}

In this section, we assume that $f$ is a biderivation of a finite dimensional complex simple Lie algebra $L$. The following lemmas are useful to investigate the biderivation of $L$.
\begin{lemma}\label{phipsi}
There are two linear maps $\phi$ and $\psi$ from $L$ into itself such that
\begin{equation}\label{jianhua1}
f(x,y)=[\phi(x), y]=[x, \psi(y)], \ \ \forall x,y\in L.
\end{equation}
\end{lemma}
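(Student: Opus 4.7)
The plan is to produce $\phi$ and $\psi$ by exploiting the fact that, for each fixed argument, a biderivation restricts to an ordinary derivation in the other argument, and then to invoke Lemma \ref{innerLie} to make each such derivation inner. The simplicity of $L$ (hence the triviality of its center) will then give both uniqueness of the associated elements and linearity of the resulting maps.

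First I would fix $y \in L$ and define $D_y\colon L\to L$ by $D_y(x)=f(x,y)$. Rewriting identity (\ref{2der}) with the bracket flipped, I get $D_y([x_1,x_2])=[D_y(x_1),x_2]+[x_1,D_y(x_2)]$, so $D_y$ is a derivation of $L$. By Lemma \ref{innerLie} there exists an element $a_y\in L$ with $D_y(x)=[a_y,x]$ for every $x$. Since $L$ is simple its center is trivial, so $a_y$ is uniquely determined by this equation. Using this uniqueness together with the bilinearity of $f$, I would check that for scalars $\lambda,\mu$ and $y,y'\in L$ the element $a_{\lambda y+\mu y'}-\lambda a_y-\mu a_{y'}$ lies in the center of $L$, hence is zero. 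Thus the assignment $y\mapsto -a_y$ is linear, and setting $\psi(y):=-a_y$ gives $f(x,y)=[a_y,x]=[x,\psi(y)]$.

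Symmetrically, for fixed $x\in L$ I would define $E_x\colon L\to L$ by $E_x(y)=f(x,y)$. Then identity (\ref{1der}) is exactly the Leibniz rule for $E_x$, so $E_x$ is a derivation of $L$. Again by Lemma \ref{innerLie} there is a unique element $b_x\in L$ (uniqueness via the trivial center) with $E_x(y)=[b_x,y]$. The same centerless argument combined with the bilinearity of $f$ shows that $x\mapsto b_x$ is linear, so putting $\phi(x):=b_x$ yields $f(x,y)=[\phi(x),y]$.

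Combining the two presentations produces $f(x,y)=[\phi(x),y]=[x,\psi(y)]$ for all $x,y\in L$, which is exactly (\ref{jianhua1}). I do not anticipate a serious obstacle here: the nontrivial inputs are Lemma \ref{innerLie} and the centerlessness of a simple Lie algebra, and everything else is just bookkeeping. The only point that deserves care is the linearity of $\phi$ and $\psi$, since $a_y$ and $b_x$ are initially only produced pointwise; that step, however, is handled cleanly by uniqueness modulo the center together with bilinearity of $f$.
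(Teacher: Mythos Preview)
Your proof is correct and follows essentially the same approach as the paper: fix one argument, observe the resulting map is a derivation, apply Lemma~\ref{innerLie} to make it inner, and then argue linearity from bilinearity of $f$. Your treatment is in fact slightly more careful, since you explicitly invoke the triviality of the center to obtain uniqueness of the representing elements and hence linearity of $\phi$ and $\psi$, whereas the paper simply asserts that linearity follows from bilinearity.
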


\begin{proof}
For the biderivation $f$ of $L$ and a fixed element $x\in L$, we define a map $\phi_x: L\rightarrow L$ given by $\phi_x(y)=f(x,y)$. Then, from Eq. (\ref{1der}) we see that $\phi_x$ is a derivation of $L$. Thus, by Lemma \ref{innerLie} we know that $\phi_x$ is an inner derivation of $L$. So there is a map
$\phi: L\rightarrow L$ such that $\phi_x={\rm ad}\phi(x)$, i.e.,  $f(x,y)=[\phi(x), y]$. Since $f$ is bilinear, thence $\phi$ can be proved to be linear. Similarly, we define a map $\psi_z$ from $L$ into itself given by $\psi_z(y)=f(y, z)$ for all $y\in L$. We can obtain a linear map $\psi$ from $L$ into itself such that $f(x,y)={\rm ad}(-\psi (y))(x)=[x, \psi(y)]$. The proof is completed.
\end{proof}

\begin{lemma}
For any $h\in \eta$, we have $\phi(h), \psi(h) \in \eta$, where $\phi$ and $\psi$ are defined by Lemma \ref{phipsi}.
\end{lemma}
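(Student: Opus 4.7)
The plan is to exploit the duality $[\phi(h),y]=[h,\psi(y)]$ from Lemma \ref{phipsi}, and then use the self-centralizing property of the Cartan subalgebra $\eta$ (that is, $C_L(\eta)=\eta$) to conclude.

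First I would show that $\phi(h)\in\eta$. Fix $h\in\eta$. Since $h\in\eta$ and $\eta$ is abelian while $[\eta,L_\alpha]\subseteq L_\alpha$, the element $[h,\psi(y)]$ lies in $\bigoplus_{\alpha\in\Phi}L_\alpha$ for every $y\in L$, i.e.\ it has zero Cartan component. By Lemma \ref{phipsi}, $[\phi(h),y]=[h,\psi(y)]$, so $[\phi(h),y]$ also has zero $\eta$-component for every $y\in L$. Now write the root space decomposition
\[
\phi(h)=h_0+\sum_{\alpha\in\Phi}c_\alpha x_\alpha,\qquad h_0\in\eta,\ c_\alpha\in\mathbb{C}.
\]
Plug $y=x_\beta$ for each $\beta\in\Phi$: the only contribution to the Cartan component of $[\phi(h),x_\beta]$ comes from the term with $\alpha=-\beta$, giving $c_{-\beta}[x_{-\beta},x_\beta]=-c_{-\beta}h_\beta$. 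Setting this to zero and varying $\beta$ over $\Phi$ forces $c_\alpha=0$ for every $\alpha\in\Phi$, so $\phi(h)=h_0\in\eta$.

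Next I would handle $\psi(h)$ using what we just proved. For any $h'\in\eta$, Lemma \ref{phipsi} gives
\[
[h',\psi(h)]=f(h',h)=[\phi(h'),h].
\]
Since $\phi(h')\in\eta$ by the previous step, and $\eta$ is abelian, the right-hand side vanishes. Hence $\psi(h)$ commutes with every element of $\eta$. Because the Cartan subalgebra of a finite dimensional complex simple Lie algebra is self-centralizing, $C_L(\eta)=\eta$, and therefore $\psi(h)\in\eta$.

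The only potentially delicate step is checking the Cartan component in step one; this requires no subtlety beyond reading off the term from the expansion and using $[x_{-\beta},x_\beta]=-h_\beta$, so there is no real obstacle. The argument for $\psi$ is then a quick consequence of the $\phi$ statement together with the standard self-centralizing property of $\eta$.
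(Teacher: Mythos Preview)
Your proof is correct and considerably cleaner than the paper's. Both arguments hinge on the identity $[\phi(h),y]=[h,\psi(y)]$ from Lemma~\ref{phipsi}, but they exploit it differently.

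The paper treats $\phi$ and $\psi$ symmetrically and proceeds in two stages. It first fixes a root $\alpha$, writes $\phi(h_\alpha)$ in the Chevalley basis, and compares the two expressions for $f(h_\alpha,x_{\pm\alpha})$ to kill the coefficients of $x_\alpha$ and $x_{-\alpha}$ in $\phi(h_\alpha)$. Then, for each remaining root $\gamma$, it computes $f(h_\alpha,h_\gamma)$ in two ways and compares coefficients to kill the $x_{\pm\gamma}$ terms; the same is done for $\psi(h_\alpha)$.

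Your route is more conceptual. The single observation that $\mathrm{ad}\,h$ has image in $\bigoplus_{\alpha\in\Phi}L_\alpha$ forces the Cartan projection of $[\phi(h),x_\beta]$ to vanish, and reading off that projection immediately gives $c_{-\beta}=0$ for every $\beta$ in one stroke, with no need to treat the roots $\pm\alpha$ separately from the rest. Your treatment of $\psi$ is also different: rather than repeating the argument, you bootstrap from the $\phi$ result and invoke the self-centralizing property $C_L(\eta)=\eta$, which the paper does not use explicitly. This asymmetric bootstrap shortens the argument and highlights the structural reason the lemma holds.
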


\begin{proof}
For any $\alpha\in \Phi_+$, we choose $x_\alpha\in L_\alpha, x_{-\alpha}\in L_{-\alpha}$ and $h_\alpha\in \eta$ as above satisfying the Serre relations
$$
[x_\alpha, x_{-\alpha}]=h_\alpha, \ \ [h_\alpha, x_\alpha]=2x_\alpha, \ \ [h_\alpha, x_{-\alpha}]=-2x_{-\alpha}.
$$
For any but fixed $\alpha\in \Phi$,  here $\Phi\setminus \{\alpha, -\alpha\}$ is denoted by $\tilde{\Phi}_\alpha$. Let
\begin{eqnarray}
\phi(h_\alpha)&=&a_1h_1+a_2x_\alpha+a_3x_{-\alpha}+\sum_{\beta\in \tilde{\Phi}_\alpha} k_{\beta}x_\beta, \label{ee1}\\
\phi(x_\alpha)&=&b_1h_2+b_2x_\alpha+b_3x_{-\alpha}+\sum_{\beta\in \tilde{\Phi}_\alpha} t_{\beta}x_\beta, \label{ee2}\\
\phi(x_{-\alpha})&=&c_1h_3+c_2x_\alpha+c_3x_{-\alpha}+\sum_{\beta\in \tilde{\Phi}_\alpha} l_{\beta}x_\beta, \label{ee3} \\
\psi(h_\alpha)&=&s_1h_4+s_2x_\alpha+s_3x_{-\alpha}+\sum_{\beta\in \tilde{\Phi}_\alpha} m_{\beta}x_\beta, \label{ee4}\\
\psi(x_\alpha)&=&p_1h_5+p_2x_\alpha+p_3x_{-\alpha}+\sum_{\beta\in \tilde{\Phi}_\alpha} n_{\beta}x_\beta, \label{ee5}\\
\psi(x_{-\alpha})&=&q_1h_6+q_2x_\alpha+q_3x_{-\alpha}+\sum_{\beta\in \tilde{\Phi}_\alpha} r_{\beta}x_\beta \label{ee6}
\end{eqnarray}
for some $a_i,b_i,c_i,s_i,p_i, q_i, k_{\beta},t_{\beta},l_{\beta}, m_{\beta}, n_{\beta},r_{\beta}\in \mathbb{C}$, $i=1,2,3, \beta\in \tilde{\Phi}_\alpha$, and $h_j\in \eta$, $j=1,2, \cdots, 6$.
By Lemma \ref{phipsi} and Eqs. (\ref{ee1}), (\ref{ee5}), we have
\begin{equation}\label{aaa}
f(h_\alpha, x_\alpha)=[\phi(h_\alpha), x_\alpha]=a_1\alpha(h_1)x_{\alpha}-a_3h_{\alpha}+\sum_{\beta\in \tilde{\Phi}_\alpha} k_{\beta}[x_\beta, x_\alpha]
\end{equation}
and
\begin{equation}\label{-aaa}
f(h_\alpha, x_\alpha)=[h_\alpha, \psi(x_\alpha)]=2p_2x_{\alpha}-2p_3x_{-\alpha}+\sum_{\beta\in \tilde{\Phi}_\alpha} m_{\beta}\beta(h_\alpha)x_\beta.
\end{equation}
By comparing the above two equations, and $[x_\beta, x_\alpha]\in L_{\alpha+\beta}\neq \eta$ since $\beta\in  \tilde{\Phi}_\alpha$, we have
$a_3=0$. In the same way by considering $f(h_\alpha, x_{-\alpha})$ with Eqs. (\ref{ee1}) and (\ref{ee6}), $a_2=0$. Similarly, considering the images $f(x_\alpha, h_\alpha)$ and $f(x_{-\alpha}, h_\alpha)$, we have $s_2=s_3=0$ by Eqs. (\ref{ee2}), (\ref{ee3}) and (\ref{ee4}).
In other words, for $a_1h_1=\hat{h}^\alpha\in \eta$ and $s_1 h_4=\check{h}^\alpha\in \eta$, it can be seen that
\begin{eqnarray}
\phi(h_\alpha)&=&\hat{h}^\alpha+\sum_{\beta\in \tilde{\Phi}_\alpha} k_{\beta}x_\beta, \label{ee7}\\
\psi(h_\alpha)&=&\check{h}^\alpha+\sum_{\beta\in \tilde{\Phi}_\alpha} m_{\beta}x_\beta. \label{ee8}
\end{eqnarray}
Now for any root $\gamma\in \tilde{\Phi}_\alpha$, here $\Phi\setminus \{ \alpha, -\alpha, \gamma, -\gamma\}$ is denoted by $\tilde{\Phi}_{\alpha,\gamma}$.  By Eqs. (\ref{ee7}) and (\ref{ee8}), we can assume that
\begin{eqnarray}
\phi(h_\alpha)&=&\hat{h}^\alpha+k_\gamma x_\gamma+k_{-\gamma} x_{-\gamma}+\sum_{\beta\in \tilde{\Phi}_{\alpha,\gamma}} k_{\beta} x_\beta, \label{hint1}\\
\psi(h_\gamma)&=&\check{h}^\gamma+\mu_\alpha x_\alpha+\mu_{-\alpha} x_{-\alpha} +\sum_{\beta\in \tilde{\Phi}_{\alpha,\gamma}} \mu_{\beta} x_\beta.\nonumber
\end{eqnarray}
By Lemma \ref{phipsi} and the above two equations, it follows  that
\begin{eqnarray}
f(h_\alpha, h_\gamma)=[\phi(h_\alpha),h_\gamma]=-2k_\gamma x_\gamma+2k_{-\gamma} x_{-\gamma}-\sum_{\beta\in \tilde{\Phi}_{\alpha,\gamma}} k_{\beta}\beta(h_\gamma) x_\beta \label{ee9}
\end{eqnarray}
and
\begin{eqnarray}
f(h_\alpha, h_\gamma)=[h_\alpha, \psi(h_\gamma)]=2\mu_\alpha x_\alpha-2\mu_{-\alpha} x_{-\alpha} + \sum_{\beta\in \tilde{\Phi}_{\alpha,\gamma}} \mu_{\beta}\beta(h_\alpha) x_\beta. \label{ee10}
\end{eqnarray}
By comparing Eqs. (\ref{ee9}) and (\ref{ee10}), one has $k_\gamma=k_{-\gamma}=\mu_\alpha =\mu_{-\alpha}=0$.  Due to the arbitrariness of $\gamma$, we get
 $\phi(h_\alpha)=\hat{h}^\alpha\in \eta$ by Eq. (\ref{hint1}). Similarly, with the same idea, $\psi(h_\alpha)=\check{h}^\alpha\in \eta$. Since the vectors of set $\{h_\alpha, \alpha\in \Phi\}$ span $\eta$, we have $\phi(\eta)\subset \eta$ and $\psi(\eta)\subset \eta$, as desired.
\end{proof}

\begin{lemma}\label{KT}
Let $\phi$ and $\psi$ be defined by Lemma \ref{phipsi}. Then there is a complex number $\lambda$ such that
\begin{equation}
\phi(x)=\psi(x)=\lambda x, \forall x\in L_\alpha, \alpha\in \Phi.
\end{equation}
\end{lemma}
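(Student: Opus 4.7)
My strategy has three phases: pin down the shape of $\phi(x_\alpha)$ and $\psi(x_\alpha)$ using only the Cartan action, eliminate their $\eta$-components by pairing with other root vectors, and promote the resulting scalars to a single global constant via Lemma \ref{connected}. For each $h \in \eta$ I would compute $f(h, x_\alpha)$ in the two forms supplied by Lemma \ref{phipsi}. The preceding lemma gives $\phi(h) \in \eta$, so one form reads $\alpha(\phi(h)) x_\alpha \in L_\alpha$, while the other is $[h, \psi(x_\alpha)]$. Expanding $\psi(x_\alpha) = \sum_{\gamma \in \Phi} a_\gamma x_\gamma + h_0$ along the root space decomposition, this second form equals $\sum_\gamma \gamma(h) a_\gamma x_\gamma$, and requiring it to lie in $L_\alpha$ for every $h \in \eta$ forces $a_\gamma = 0$ whenever $\gamma \neq \alpha$. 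Hence $\psi(x_\alpha) = c_\alpha x_\alpha + h'_\alpha$ for some $c_\alpha \in \mathbb{C}$ and $h'_\alpha \in \eta$, together with the bonus identity $\alpha(\phi(h)) = c_\alpha \alpha(h)$ for all $h \in \eta$. An identical computation with $f(x_\alpha, h)$ yields $\phi(x_\alpha) = d_\alpha x_\alpha + h''_\alpha$ and $\alpha(\psi(h)) = d_\alpha \alpha(h)$.

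Next, I would kill $h'_\alpha$ and $h''_\alpha$ by comparing the two expansions of $f(x_\alpha, x_\beta)$ for varying $\beta$. For $\beta \in \Phi$ with $\alpha \neq \pm\beta$, equating $[\phi(x_\alpha), x_\beta]$ with $[x_\alpha, \psi(x_\beta)]$ and matching the $x_\beta$-, $x_\alpha$-, and (when $\alpha + \beta \in \Phi$) $x_{\alpha+\beta}$-components gives $\beta(h''_\alpha) = 0$, $\alpha(h'_\beta) = 0$, and the linkage $d_\alpha = c_\beta$. The remaining case $\beta = -\alpha$ produces, via $[\phi(x_\alpha), x_{-\alpha}] = [x_\alpha, \psi(x_{-\alpha})]$, the identity $d_\alpha = c_{-\alpha}$ from the $\eta$-part and $\alpha(h''_\alpha) = 0$, $\alpha(h'_{-\alpha}) = 0$ from the root-vector components. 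Consequently $\gamma(h''_\alpha) = 0$ for every $\gamma \in \Phi$; since $\Phi$ spans $\eta^*$, I conclude $h''_\alpha = 0$, and symmetrically $h'_\alpha = 0$.

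Finally, I would globalize the scalars. Applied to $\alpha + \beta$ with $\alpha + \beta \in \Phi$ and $\alpha \neq \pm\beta$, the identity $\alpha(\phi(h)) = c_\alpha \alpha(h)$ reads $c_{\alpha+\beta}(\alpha+\beta) = c_\alpha \alpha + c_\beta \beta$ in $\eta^*$; since $\alpha, \beta$ are linearly independent in a reduced root system, this forces $c_\alpha = c_\beta = c_{\alpha+\beta}$, and the same linearity forces $c_{-\alpha} = c_\alpha$. Lemma \ref{connected} now joins any two roots by a chain whose consecutive sums lie in $\Phi \cup \{0\}$, so the $c$-values agree along each link and $c_\alpha$ is a single constant $\lambda$. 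The identical argument yields a constant $\lambda'$ for the $d$'s, and the earlier linkage $d_\alpha = c_{-\alpha} = c_\alpha$ collapses $\lambda' = \lambda$. The main obstacle I anticipate is the second step: eliminating $h'_\alpha$ and $h''_\alpha$ requires pairings against every $\beta \in \Phi$, including the exceptional $\beta = -\alpha$ case handled separately, and one must carefully track the coefficient in each of the three root spaces $L_\alpha$, $L_\beta$, $L_{\alpha+\beta}$ when comparing the two expansions; once this is done, the eigenvalue relations combined with the connectivity of $\Phi$ make the globalization essentially automatic.
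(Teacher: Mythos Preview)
Your proposal is correct and follows essentially the same three-phase strategy as the paper: constrain $\phi(x_\alpha),\psi(x_\alpha)$ to the form ``scalar $\cdot x_\alpha$ plus an $\eta$-part'' via pairing with the Cartan, kill the $\eta$-part by comparing the two expansions of $f(x_\alpha,x_\beta)$ (including the special case $\beta=-\alpha$), and then globalize the scalars using Lemma~\ref{connected}. The only cosmetic differences are that the paper selects a single regular element $h$ with $\beta(h)\neq 0$ for all $\beta\in\Phi$ rather than letting $h$ range over $\eta$, and it reads off the equalities $t_{-\alpha}=t_\alpha$ and $t_\alpha=k_\beta$ directly from the explicit formula $t_\alpha=\alpha(\psi(h))/\alpha(h)$ and the $L_{\alpha+\beta}$-component, whereas you obtain $c_\alpha=c_\beta=c_{\alpha+\beta}$ by viewing the eigenvalue relation $\alpha(\phi(h))=c_\alpha\,\alpha(h)$ as an identity in $\eta^*$.
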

\begin{proof}
For every $\alpha\in \Phi$, it is easy to see that the set $\ker \alpha=\{h\in \eta| \alpha(h)=0\}$ is a proper subspace of $\eta$.
From our well-known result, we have  by $|\Phi|<+\infty$ that $\bigcup_{\alpha\in \Phi} \ker \alpha$ is a proper subset of $\eta$.
Thus, we can find a vector $h\in \eta\setminus \bigcup_{\alpha\in \Phi} \ker \alpha$. This indicates that $\beta (h)\neq 0$ for all $\beta \in \Phi$.
Choose a fixed root $\alpha$,  by Lemma \ref{phipsi} we deduce
\begin{equation}\label{88}
f(x_\alpha, h)=[\phi(x_\alpha), h]=-[h, \phi(x_\alpha)]
\end{equation}
and
\begin{equation}\label{99}
f(x_\alpha, h)=[x_\alpha, \psi(h)]=-[\psi(h), x_\alpha]=-\alpha(\psi(h))x_\alpha.
\end{equation}
Let
\begin{equation}\label{phix}
\phi(x_\alpha)=\sum_{\beta\in \Phi} t_\beta x_\beta + \hat{h}^\alpha,
\end{equation}
where $t_\beta\in \mathbb{C}$ and $\hat{h}^\alpha\in \eta$. This, together with Eq. (\ref{88}), gives that
\begin{equation}\label{100}
f(x_\alpha, h)=-\sum_{\beta\in \Phi} t_\beta \beta(h) x_\beta.
\end{equation}
With Eqs. (\ref{99}) and (\ref{100}), we have  $t_\beta \beta(h)=0$ for any
$\beta\in \Phi\setminus \{\alpha \}$ and $t_\alpha \alpha(h)=\alpha(\psi(h))$. With Eq. (\ref{phix}) and the fact $\beta(h)\neq 0,\alpha(h)\neq 0$, it implies that
\begin{equation}\label{101}
\phi(x_\alpha)=t_\alpha x_\alpha + \hat{h}^\alpha, \ \ t_\alpha=\frac{\alpha(\psi(h))}{\alpha(h)}.
\end{equation}
Similarly, by considering the image $f(h, x_\alpha)$, we have
\begin{equation}\label{102}
\psi(x_\alpha)=k_\alpha x_\alpha + \check{h}^\alpha, \ \ k_\alpha=\frac{\alpha(\phi(h))}{\alpha(h)},
\end{equation}
where $k_\alpha\in \mathbb{C}$ and $\check{h}^\alpha\in \eta$.
It should be pointed out that Eqs. (\ref{101}) and (\ref{102}) also tell us that
\begin{equation}\label{103}
t_{-\alpha}=t_\alpha, k_{-\alpha}=k_\alpha, \ \ \forall \alpha\in \Phi.
\end{equation}
For any $\alpha,\beta\in \Phi$, by Lemma \ref{phipsi} and Eqs. (\ref{101}), (\ref{102}), we have
\begin{equation}\label{104}
f(x_\alpha, x_\beta)=[\phi(x_\alpha), x_\beta]=t_\alpha[x_\alpha, x_\beta]+\beta(\hat{h}^\alpha)x_\beta
\end{equation}
and
\begin{equation}\label{105}
f(x_\alpha, x_\beta)=[x_\alpha, \psi(x_\beta)]=k_\beta[x_\alpha, x_\beta]-\alpha(\check{h}^\beta)x_\alpha.
\end{equation}
Combing Eq. (\ref{104}) with Eq. (\ref{105}), we firstly see that $\beta(\hat{h}^\alpha)=\alpha(\check{h}^\beta)=0$ if $\alpha\neq \beta$. But $-\alpha(\hat{h}^\alpha)=\alpha(\check{h}^{-\alpha})=0$ by taking $\beta=-\alpha$. This means that $\beta(\hat{h}^\alpha)=0$ for all $\beta\in \Phi$, i.e.,
$\hat{h}^\alpha\in \bigcap_{\beta\in \Phi} \ker \beta$,  which gives $\hat{h}^\alpha=0$. Similarly,  by taking $\alpha=-\beta$ we have $\check{h}^\beta=0$. Hence, by Eqs. (\ref{101}), (\ref{102}), one has that
\begin{equation}\label{106}
\phi(x_\alpha)=t_\alpha x_\alpha, \ \
\psi(x_\alpha)=k_\alpha x_\alpha.
\end{equation}
With $f(x_\alpha,x_{-\alpha})=[\phi(x_\alpha),x_{-\alpha}]=[x_\alpha,\psi(x_{-\alpha})]$ and Eq. (\ref{106}), it follows that
 \begin{equation}\label{1065}
 t_\alpha=k_{-\alpha}, \ \forall \alpha\in \Phi.
 \end{equation}
Then we review Eqs. (\ref{104}) and (\ref{105}), and get
 \begin{equation}\label{107}
t_\alpha=k_\beta, \ \ {\text for}\ \ \alpha+\beta\in \Phi.
\end{equation}
Eqs. (\ref{103}),(\ref{1065}) and (\ref{107}) imply that if $\alpha,\beta\in \Phi$ such that $\alpha+\beta\in \Phi\cup \{0\}$, then $t_\alpha=k_\beta=t_{\beta}=k_{\alpha}$. And,  for arbitrary connected roots $\alpha,\beta\in \Phi$, $t_\alpha=k_\beta=t_{\beta}=k_{\alpha}$. 
  By Lemma \ref{connected}, we conclude that
\begin{equation}\label{109}
t_\alpha=t_{\alpha^\prime}=k_\beta=k_{\beta^\prime}, \  \forall \alpha, \alpha^\prime, \beta, \beta^\prime \in \Phi.
\end{equation}
Let $t_\alpha=\lambda$ in Eq. (\ref{109}), the proof is completed.
\end{proof}

We now state our main result as follows.

\begin{theorem}\label{maintheo}
Suppose that $L$ is a finite dimensional  complex simple Lie algebra. Then $f$ is a biderivation of $L$ if  and only if it is inner, i.e., there is a complex number $\lambda$ such that
$$
f(x,y)=\lambda [x,y], \ \ \forall x,y\in L.
$$
\end{theorem}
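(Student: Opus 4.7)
The plan is to combine Lemma \ref{phipsi}, the preceding lemma showing $\phi(\eta),\psi(\eta)\subseteq\eta$, and Lemma \ref{KT} in order to pin down the two linear maps $\phi$ and $\psi$ of Lemma \ref{phipsi} completely. Lemma \ref{KT} already tells us that for every root space $L_\alpha$, both $\phi$ and $\psi$ act there as multiplication by a single, root-independent scalar $\lambda\in\mathbb{C}$. What remains is to identify their action on the Cartan subalgebra $\eta$, and then to assemble the pieces into the identity $f(x,y)=\lambda[x,y]$.

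To determine $\phi|_{\eta}$, I would fix $h\in\eta$ and an arbitrary root $\alpha\in\Phi$, and compute $f(h,x_\alpha)$ in two ways using the identity $f(x,y)=[\phi(x),y]=[x,\psi(y)]$ of Lemma \ref{phipsi}. Since $\phi(h)\in\eta$,
\[
f(h,x_\alpha)=[\phi(h),x_\alpha]=\alpha(\phi(h))\,x_\alpha,
\]
while Lemma \ref{KT} gives
\[
f(h,x_\alpha)=[h,\psi(x_\alpha)]=[h,\lambda x_\alpha]=\lambda\,\alpha(h)\,x_\alpha.
\]
Comparing forces $\alpha(\phi(h)-\lambda h)=0$ for every $\alpha\in\Phi$. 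Since the root system of a finite-dimensional complex simple Lie algebra spans $\eta^{*}$, this pins down $\phi(h)=\lambda h$ for every $h\in\eta$. A completely symmetric computation with $f(x_\alpha,h)$, using the other form of Lemma \ref{phipsi}, gives $\psi(h)=\lambda h$. Combined with Lemma \ref{KT}, I conclude $\phi=\psi=\lambda\cdot\mathrm{id}_L$ on all of $L$, so that
\[
f(x,y)=[\phi(x),y]=\lambda[x,y]\qquad\text{for all }x,y\in L.
\]
The converse direction, that every scalar multiple of the bracket is a biderivation, is an immediate Jacobi-identity check.

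The main obstacle has already been surmounted in the previous lemmas; specifically, Lemma \ref{KT} uses the connectedness of the root system (Lemma \ref{connected}) to propagate a single eigenvalue $\lambda$ across all root spaces, and this is the substantive content of the theorem. The passage from Lemma \ref{KT} to Theorem \ref{maintheo} is then a short bookkeeping argument whose only nontrivial ingredient is the standard semisimplicity fact that the roots span $\eta^{*}$.
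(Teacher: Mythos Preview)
Your proposal is correct and follows essentially the same route as the paper's own proof: after invoking Lemma \ref{KT}, you compute $f(h,x_\alpha)$ two ways to obtain $\alpha(\phi(h)-\lambda h)=0$ for all $\alpha\in\Phi$, conclude $\phi|_\eta=\lambda\,\mathrm{id}$, and then read off $f(x,y)=[\phi(x),y]=\lambda[x,y]$. The only cosmetic differences are that the paper does not bother to verify $\psi|_\eta=\lambda\,\mathrm{id}$ separately (since $\phi$ alone suffices for the conclusion) and writes out the final step with an explicit root-space decomposition of $x$, whereas you phrase it as $\phi=\lambda\,\mathrm{id}_L$.
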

\begin{proof}

The ``if'' direction is obvious. We now prove the ``only if'' direction.
By Lemma \ref{KT}, there is $\lambda\in \mathbb{C}$ such that
$
\phi(x)=\psi(x)=\lambda x, \forall x\in L_\alpha, \alpha\in \Phi,
$
where $\phi$ and $\psi$ are given by Lemma \ref{phipsi}. For any $h\in \eta$ and $\alpha\in \Phi$,
 by Lemma \ref{phipsi}, one shows that
$f(h,x_\alpha)=[\phi(h), x_\alpha]=[h, \psi(x_\alpha)]$. It implies that $\alpha(\phi(h))x_\alpha=\lambda \alpha(h)x_\alpha$. In other words, one has
$$
\alpha(\lambda h-\phi(h))=0, \ \ \forall \alpha \in \Phi,
$$
which yields $\phi(h)=\lambda h$ for all $h\in \eta$. Now, for any $x,y\in L$, let $x=\sum_{\alpha\in \Phi}l_\alpha x_\alpha+h$, where $l_\alpha\in \mathbb{C}$. Thus,
$$
f(x,y)=[\phi(x), y]=[\sum_{\alpha\in \Phi}l_\alpha \phi(x_\alpha)+\phi(h), y]=[\sum_{\alpha\in \Phi}l_\alpha \lambda x_\alpha+\lambda h, y]=[\lambda x, y].
$$
The proof is completed.
\end{proof}

\section{Applications}

\subsection{Biderivation of a general linear Lie algebra}

We firstly characterize the biderivation of a general linear Lie algebra $gl_n(\mathbb{C})$, which consisting all $n\times n$ complex matrices under the Lie product $[x,y]=xy-yx$ for all $x,y\in gl_n(\mathbb{C})$. Recall that $gl_n(\mathbb{C})$ has a Lie subalgebra
$sl_n(\mathbb{C})=\{x\in gl_n(\mathbb{C})| {\rm tr} (x)=0\}$, which is the $A_{n-1}$ type simple Lie algebra, where ${\rm tr} (x)$ denotes the trace of $x$. The following  fact is well known.

\begin{lemma}\label{glsl}
(i) $[gl_n(\mathbb{C}),gl_n(\mathbb{C})]=[sl_n(\mathbb{C}),sl_n(\mathbb{C})]=sl_n(\mathbb{C})$;

(ii)  $gl_n(\mathbb{C})=\mathbb{C}I_n\oplus sl_n(\mathbb{C})$. That is, for any $x\in gl_n(\mathbb{C})$, the following decomposition is unique:
 $$
 x= \frac{1}{n}{\rm tr} (x)I_n+ u_x
 $$
 where $u_x=x- \frac{1}{n}{\rm tr} (x)I_n \in sl_n(\mathbb{C})$ and
$I_n$ is the $n\times n$ identity matrix.
\end{lemma}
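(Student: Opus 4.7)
The plan is to handle (i) and (ii) by elementary trace and commutator computations with the standard matrix units $E_{ij}$.

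For part (i), I would first note that for any $x,y\in gl_n(\mathbb{C})$ the identity ${\rm tr}(xy)={\rm tr}(yx)$ forces ${\rm tr}([x,y])=0$, which gives $[gl_n(\mathbb{C}),gl_n(\mathbb{C})]\subseteq sl_n(\mathbb{C})$; combined with the obvious inclusion $[sl_n(\mathbb{C}),sl_n(\mathbb{C})]\subseteq[gl_n(\mathbb{C}),gl_n(\mathbb{C})]$ this reduces everything to showing $sl_n(\mathbb{C})\subseteq[sl_n(\mathbb{C}),sl_n(\mathbb{C})]$. For this I would exhibit each vector of the usual basis of $sl_n(\mathbb{C})$ as a commutator of two elements of $sl_n(\mathbb{C})$: the relation $E_{ab}E_{cd}=\delta_{bc}E_{ad}$ yields $[E_{ii}-E_{jj},E_{ij}]=2E_{ij}$ for $i\neq j$, which recovers all off-diagonal matrix units, and $[E_{ij},E_{ji}]=E_{ii}-E_{jj}$, which recovers a spanning set for the traceless diagonal subspace. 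Together these exhaust a basis of $sl_n(\mathbb{C})$.

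For part (ii), existence is immediate: for any $x\in gl_n(\mathbb{C})$ set $u_x:=x-\frac{1}{n}{\rm tr}(x)I_n$ and observe ${\rm tr}(u_x)={\rm tr}(x)-\frac{1}{n}{\rm tr}(x)\cdot n=0$, so $u_x\in sl_n(\mathbb{C})$. Uniqueness reduces to the intersection being trivial, $\mathbb{C}I_n\cap sl_n(\mathbb{C})=\{0\}$: if $cI_n\in sl_n(\mathbb{C})$ then $0={\rm tr}(cI_n)=cn$, forcing $c=0$. This simultaneously yields the direct sum $gl_n(\mathbb{C})=\mathbb{C}I_n\oplus sl_n(\mathbb{C})$ and the uniqueness of the stated decomposition.

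No step poses a genuine obstacle, since the entire result is a direct consequence of properties of the trace; the only care required is to present the basis-level computations in (i) cleanly enough that both the off-diagonal and the traceless diagonal parts of $sl_n(\mathbb{C})$ are visibly covered, which is why I would split the computation into the two identities above rather than invoke simplicity of $sl_n(\mathbb{C})$ as a black box.
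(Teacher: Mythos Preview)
Your proof is correct. The paper does not actually prove this lemma at all: it is introduced with the sentence ``The following fact is well known'' and no argument is given. Your elementary verification via the trace identity ${\rm tr}(xy)={\rm tr}(yx)$ and the matrix-unit commutator relations is exactly the standard way to justify it, and supplies the details the paper omits.
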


We have the following result by Theorem \ref{maintheo}.

\begin{theorem}\label{wo}
$f$ is a biderivation of $gl_n(\mathbb{C})$ if and only if there are two complex numbers $\lambda,\mu$ such that
$$
f(x,y)=\mu {\rm tr} (x) {\rm tr} (y)I_n+\lambda [x,y], \ \ \forall x,y\in gl_n(\mathbb{C}).
$$
\end{theorem}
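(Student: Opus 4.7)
The ``if'' direction requires only a direct verification, using that ${\rm tr}([x,y])=0$ and $I_n$ is central in $gl_n(\mathbb{C})$: each piece $\mu\,{\rm tr}(x){\rm tr}(y)I_n$ and $\lambda[x,y]$ is separately a biderivation, and linearity handles the sum. The real content lies in the converse. My plan is to exploit the decomposition $gl_n(\mathbb{C})=\mathbb{C}I_n\oplus sl_n(\mathbb{C})$ from Lemma~\ref{glsl}, writing $x=\frac{1}{n}{\rm tr}(x)I_n+u_x$ and $y=\frac{1}{n}{\rm tr}(y)I_n+u_y$, expanding $f(x,y)$ by bilinearity into four pieces involving $f(I_n,I_n)$, $f(I_n,u_y)$, $f(u_x,I_n)$, and $f(u_x,u_y)$, and then identifying each piece separately.

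The key step is to show that $f$ restricts to a biderivation of the simple ideal $sl_n(\mathbb{C})$, so that Theorem~\ref{maintheo} applies. Given $u,v,w\in sl_n(\mathbb{C})$, decompose $f(u,v)=\alpha(u,v)I_n+g(u,v)$ with $g(u,v)\in sl_n(\mathbb{C})$ and $\alpha(u,v)\in\mathbb{C}$. Substituting into $f([u,v],w)=[u,f(v,w)]+[f(u,w),v]$ and using that $I_n$ is central, the right-hand side lies in $sl_n(\mathbb{C})$, forcing $\alpha([u,v],w)=0$. Since $[sl_n(\mathbb{C}),sl_n(\mathbb{C})]=sl_n(\mathbb{C})$ by Lemma~\ref{glsl}(i), this yields $\alpha\equiv 0$ on $sl_n\times sl_n$, so $f$ preserves $sl_n(\mathbb{C})$. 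Consequently $f|_{sl_n\times sl_n}$ is a genuine biderivation of the simple Lie algebra $sl_n(\mathbb{C})$, and Theorem~\ref{maintheo} produces $\lambda\in\mathbb{C}$ with $f(u,v)=\lambda[u,v]$ for all $u,v\in sl_n(\mathbb{C})$.

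Next I dispose of the terms involving $I_n$. Since $[I_n,\,\cdot\,]=0$, the derivation identity in the first slot gives $0=f([I_n,y],z)=[I_n,f(y,z)]+[f(I_n,z),y]=[f(I_n,z),y]$ for all $y$, so $f(I_n,z)$ lies in the center $\mathbb{C}I_n$; symmetrically $f(z,I_n)\in\mathbb{C}I_n$. Then the second-slot identity applied to $[u,v]$ with $u,v\in sl_n(\mathbb{C})$ gives $f(I_n,[u,v])=[f(I_n,u),v]+[u,f(I_n,v)]=0$, and since $[sl_n,sl_n]=sl_n$ this shows $f(I_n,u)=0$ for every $u\in sl_n(\mathbb{C})$; symmetrically $f(u,I_n)=0$. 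Setting $f(I_n,I_n)=cI_n$ and $\mu=c/n^2$, and noting that $[u_x,u_y]=[x,y]$ because $I_n$ is central, reassembly gives $f(x,y)=\mu\,{\rm tr}(x){\rm tr}(y)I_n+\lambda[x,y]$.

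The principal obstacle is the opening move in the second paragraph: verifying that $f$ sends $sl_n(\mathbb{C})\times sl_n(\mathbb{C})$ back into $sl_n(\mathbb{C})$, because a priori the image could drift into the $\mathbb{C}I_n$ summand and spoil the reduction. Once perfectness of $sl_n(\mathbb{C})$ kills the scalar component $\alpha$, Theorem~\ref{maintheo} supplies the $\lambda[x,y]$ piece for free, and the $I_n$-components are pinned down mechanically by centrality.
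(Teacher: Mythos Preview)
Your proof is correct and follows essentially the same route as the paper: both arguments use the decomposition $gl_n(\mathbb{C})=\mathbb{C}I_n\oplus sl_n(\mathbb{C})$, show via perfectness of $sl_n(\mathbb{C})$ that $f$ restricts to a biderivation of $sl_n(\mathbb{C})$ so that Theorem~\ref{maintheo} supplies the $\lambda[x,y]$ term, and then use centrality of $I_n$ together with perfectness to pin down the $I_n$-components. The only cosmetic difference is that the paper writes $x=[x_1,x_2]$ and observes $f(x,y)$ is a sum of commutators hence traceless, whereas you project onto the $\mathbb{C}I_n$ summand and kill the scalar component $\alpha$; these are the same computation.
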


\begin{proof}
The ``if'' direction is easy to verify. We now prove the ``only if'' direction.

We first claim that $\phi(I_n)\in \mathbb{C}I_n$ for any derivation $\phi$ of $gl_n(\mathbb{C})$. In fact, for all $x\in gl_n(\mathbb{C})$, by $[x,I_n]=0$ we have $0=\phi([x,I_n])=[\phi(x),I_n]+[x,\phi(I_n)]=[x,\phi(I_n)]$ . Hence, $\phi(I_n)\in Z(gl_n(\mathbb{C}))=\mathbb{C}I_n$. We have obtained the desired claim. Similar to the proof of Lemma \ref{phipsi}, if we let $\phi_x(y)=f(x,y)=\psi_y(x)$ for all $x,y\in gl_n(\mathbb{C})$, then $\phi_x$ and $\psi_y$ are both derivations of $gl_n(\mathbb{C})$. Thus, the above claim tells us that
\begin{equation}\label{incent}
f(x,I_n), f(I_n,y)\in \mathbb{C}I_n, \ \ \forall x,y\in gl_n(\mathbb{C}).
\end{equation}
On the other hand, according to Lemma \ref{glsl} (i), $x,y\in sl_n(\mathbb{C})$ can be written as $x=[x_1,x_2]$ and $y=[y_1,y_2]$, where $x_1,x_2,y_1,y_2\in sl_n(\mathbb{C})$. Thus, by the definition of biderivation, it is obtained that
$$
f(x,I_n)=f([x_1,x_2],I_n)=[f(x_1,I_n),x_2]+[x_1,f(x_2,I_n)]
$$
and
$$
f(I_n,y)=f(I_n, [y_1,y_2])=[f(I_n,y_1),y_2]+[y_1,f(I_n,y_1)].
$$
By (\ref{incent}), we conclude that $f(x_1,I_n), f(x_2,I_n),f(I_n,y_1)$ and $f(I_n,y_2)$ all lie in the center $\mathbb{C}I_n$. With the above equations, it yields that
\begin{equation}\label{I_nsl}
f(x,I_n)=f(I_n,y)=0, \ \ \forall x,y\in sl_n(\mathbb{C}).
\end{equation}
On the other hand, if $x,y\in sl_n(\mathbb{C})$,  we write as above $x=[x_1,x_2]$ for some $x_1,x_2\in sl_n(\mathbb{C})$.  Applying Eq. (\ref{1der}) one has that $f(x,y)=f([x_1,x_2],y)=[x_1,f(x_2,y)]+[f(x_1,y),x_2]\in sl_n(\mathbb{C})$. This means that $f$ restricts to $sl_n(\mathbb{C})\times sl_n(\mathbb{C})$ is also a biderivation of $sl_n(\mathbb{C})$.  By Theorem \ref{maintheo}, there is a complex number $\lambda$ such that
\begin{equation}\label{fxy=lm}
f(x,y)=\lambda [x,y], \ \ \forall x,y\in sl_n(\mathbb{C}).
\end{equation}
Now, for any $x,y\in gl_n(\mathbb{C})$, by Lemma \ref{glsl} (ii) we can write $x=\frac{1}{n}{\rm tr}(x)I_n+u_x$ and $y=\frac{1}{n}{\rm tr}(y)I_n+u_y$, where $u_x=x-\frac{1}{n}{\rm tr}(x)I_n, u_y=y-\frac{1}{n}{\rm tr}(y)I_n\in sl_n(\mathbb{C})$. Thus, 
\begin{eqnarray}
f(x,y)&=&f(\frac{1}{n}{\rm tr}(x)I_n+u_x,\frac{1}{n}{\rm tr}(y)I_n+u_y) \nonumber \\
&=&\frac{1}{n^2}{\rm tr}(x){\rm tr}(y)f(I_n,I_n)+\frac{1}{n}{\rm tr}(x)f(I_n,u_y)+\frac{1}{n}{\rm tr}(y)f(u_x,I_n)+f(u_x,u_y).\label{1000}
\end{eqnarray}
By Eq. (\ref{I_nsl}), we know that $f(I_n,u_y)=f(u_x,I_n)=0$.  Eq. (\ref{fxy=lm}) also tells us that $f(u_x,u_y)=\lambda[u_x,u_y]=\lambda[x-\frac{1}{n}{\rm tr}(x)I_n, y-\frac{1}{n}{\rm tr}(y)I_n]=\lambda[x,y]$. Hence, according to (\ref{incent}), if we let $f(I_n,I_n)=n^2\mu I_n$ for some $\mu \in \mathbb{C}$, then Eq. (\ref{1000}) becomes to
$$
f(x,y)=\mu {\rm tr}(x){\rm tr}(y)I_n+\lambda [x,y].
$$
The proof is completed.
\end{proof}

\begin{remark}
It is easy to see that the biderivation $f(x,y)=\mu {\rm tr} (x) {\rm tr} (y)I_n+\lambda [x,y]$ is non-inner. As far as we know, the examples of non-inner biderivation of a Lie algebra are limited, we can find in \cite{Hanw}. On the other hand, we want remind that this diderivation also is not skewsymmetric.
\end{remark}

\subsection{Linear commuting maps on Lie algebras}

Recall that a linear commuting map $\phi$ on a Lie algebra $L$ subject to $[\phi(x),x]=0$ for any $x\in L$. The first important result on linear (or additive ) commuting maps is Posner¡¯s theorem \cite{Pos} from 1957. Then many scholars study commuting maps on all kinds of algebra structures \cite{Bou,Bre2,Bre3,CWS,Chen2016,Fran,Hanw,WD1,WD2,XYi}. In particular, Bre\v{s}ar have been proposed briefly discuss various extensions of the notion of a commuting map \cite{Bre3}.

Obviously, if $\phi$ on
$L$ is such a map, then $[\phi(x), y] = [x, \phi(y)]$ for any $x, y\in L$. Define $f(x,y)=[\phi(x),y]=[x,\phi(y)]$, then it is easy to verify that $f$ is a biderivation of $L$. Similar to the proof of Theorem 3.1 in \cite{Chen2016}, we have the following result  by using Theorem \ref{maintheo} .

\begin{theorem}
Let $L$ be a finite dimensional complex simple Lie algebra. Then any linear map $\phi$ on $L$ is commuting if and only if it is a scalar
multiplication map on $L$.
\end{theorem}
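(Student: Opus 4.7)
The plan is to reduce the problem to the classification of biderivations already established in Theorem \ref{maintheo}. First, I would unpack what it means for $\phi$ to be commuting: linearizing the identity $[\phi(x),x]=0$ by replacing $x$ with $x+y$ yields
$$
[\phi(x),y]+[\phi(y),x]=0,\quad \text{i.e.,}\quad [\phi(x),y]=[x,\phi(y)]\quad \forall x,y\in L.
$$
This is exactly the observation already made in the remark preceding the theorem, so the two expressions $[\phi(x),y]$ and $[x,\phi(y)]$ agree and can be used interchangeably.

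Next, I would set $f(x,y):=[\phi(x),y]=[x,\phi(y)]$ and verify that $f$ is a biderivation of $L$. Bilinearity is immediate from linearity of $\phi$. For the derivation property in the first argument I would use the expression $f(u,v)=[u,\phi(v)]$ together with the Jacobi identity: a direct expansion shows
$$
[x,f(y,z)]+[f(x,z),y]=[x,[y,\phi(z)]]+[[x,\phi(z)],y]=[[x,y],\phi(z)]=f([x,y],z),
$$
and symmetrically for the second argument using $f(u,v)=[\phi(u),v]$. Both computations are one-line applications of Jacobi, so this step is routine.

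Having shown $f$ is a biderivation, I would invoke Theorem \ref{maintheo} to conclude that there exists $\lambda\in\mathbb{C}$ with $f(x,y)=\lambda[x,y]$ for all $x,y\in L$. Spelling this out gives
$$
[\phi(x)-\lambda x,\,y]=0\quad \forall x,y\in L,
$$
so $\phi(x)-\lambda x\in Z(L)$ for every $x$. Since $L$ is a finite dimensional complex simple Lie algebra its center is trivial, hence $\phi(x)=\lambda x$ for all $x\in L$, i.e., $\phi$ is a scalar multiplication map. The converse direction (scalar multiplications are commuting) is immediate. The only conceptual point to watch out for is the very first reduction step; once $f$ is identified as a biderivation, Theorem \ref{maintheo} does all the heavy lifting, and the remainder is a one-line argument using triviality of $Z(L)$.
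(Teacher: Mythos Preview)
Your proposal is correct and follows exactly the approach indicated in the paper: the paragraph preceding the theorem already records that $[\phi(x),y]=[x,\phi(y)]$ and that $f(x,y):=[\phi(x),y]$ is a biderivation, after which the paper simply invokes Theorem \ref{maintheo} (citing the analogous argument in \cite{Chen2016}). Your write-up just makes the biderivation verification and the use of $Z(L)=0$ explicit, which is precisely what the referenced proof of Theorem 3.1 in \cite{Chen2016} does.
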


And we also get   the following result by using Theorem \ref{wo}, which is a result of \cite{Fran}.

\begin{theorem}
Any linear map $\phi$ on $gl_n(\mathbb{C})$ is commuting if and only if there is a complex number $\lambda$ and a linear function $\sigma: gl_n(\mathbb{C})\rightarrow\mathbb{C}$ such that
$$
\phi(x)=\sigma(x) I_n+ \lambda x, \ \ \forall x\in gl_n(\mathbb{C}).
$$
\end{theorem}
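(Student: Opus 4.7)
The plan is to reduce to Theorem \ref{wo} by attaching a biderivation to any linear commuting map $\phi$, exactly as the passage preceding the theorem suggests. First I would linearize the commuting condition: from $[\phi(x+y),x+y]=0$ together with $[\phi(x),x]=[\phi(y),y]=0$, bilinearity yields $[\phi(x),y]=[x,\phi(y)]$ for all $x,y\in gl_n(\mathbb{C})$. Define
$$f(x,y)=[\phi(x),y]=[x,\phi(y)].$$
A short Jacobi computation (using both expressions for $f$) shows $f$ is a derivation in each argument, so $f$ is a biderivation of $gl_n(\mathbb{C})$.

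Next I would apply Theorem \ref{wo} to obtain $\lambda,\mu\in\mathbb{C}$ with
$$[\phi(x),y]=\mu\,\mathrm{tr}(x)\mathrm{tr}(y)I_n+\lambda[x,y],\qquad\forall x,y\in gl_n(\mathbb{C}).$$
Specializing $x=y=I_n$ kills the left side (since $I_n$ is central) and kills the commutator on the right, leaving $\mu n^2 I_n=0$, hence $\mu=0$. So $[\phi(x)-\lambda x,\,y]=0$ for every $y\in gl_n(\mathbb{C})$, which forces $\phi(x)-\lambda x\in Z(gl_n(\mathbb{C}))=\mathbb{C}I_n$.

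Consequently there is a scalar $\sigma(x)$, depending on $x$, with $\phi(x)=\sigma(x)I_n+\lambda x$. The formula $\sigma(x)=\tfrac{1}{n}\mathrm{tr}(\phi(x)-\lambda x)$ makes $\sigma$ linear because $\phi$ and the trace are. For the converse direction, if $\phi(x)=\sigma(x)I_n+\lambda x$, then $[\phi(x),x]=\sigma(x)[I_n,x]+\lambda[x,x]=0$, so $\phi$ is commuting. This proves both implications.

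The whole argument is short once Theorem \ref{wo} is in hand; the only step that requires a moment of thought is pinning down $\mu=0$, and this is handled cleanly by evaluating on the central element $I_n$. No further obstacle appears, so there is no substantial hard step beyond correctly invoking Theorem \ref{wo}.
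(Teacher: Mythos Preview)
Your proof is correct and follows essentially the same route as the paper: attach the biderivation $f(x,y)=[\phi(x),y]$, invoke Theorem \ref{wo}, kill the $\mu$-term, and read off $\phi(x)-\lambda x\in\mathbb{C}I_n$. The only cosmetic difference is how $\mu=0$ is obtained: you evaluate at $x=y=I_n$, while the paper observes that $[\phi(x)-\lambda x,y]$ lies in $sl_n(\mathbb{C})\cap\mathbb{C}I_n=\{0\}$; both arguments are equally short and valid.
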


\begin{proof}
The ``if'' direction is easy to verify. We now prove the ``only if'' direction.

By the above discussion we see that $f(x,y)=[\phi(x),y]$, $x,y\in gl_n(\mathbb{C})$ is a biderivation of $gl_n(\mathbb{C})$. By Theorem \ref{wo}, it shows that $[\phi(x),y]=\mu {\rm tr}(x){\rm tr}(y)I_n+\lambda [x,y]$ for some $\mu, \lambda\in \mathbb{C}$. Furthermore, we have $[\phi(x)-\lambda x,y]=\mu {\rm tr}(x){\rm tr}(y)I_n \in sl_n(\mathbb{C})\cap \mathbb{C}I_n=\{0\}$, i.e., $[\phi(x)-\lambda x,y]=0$ for all $x,y\in gl_n(\mathbb{C})$. This means that $\phi(x)-\lambda x\in Z(gl_n(\mathbb{C}))=\mathbb{C}I_n$. Thus, we can find a map $\sigma$ from $gl_n(\mathbb{C})$ into $\mathbb{C}$ such that
$$
\phi(x)-\lambda x=\sigma (x)I_n.
$$
It is easy to prove that $\sigma$ is linear. This completes the proof.
\end{proof}

\section*{ Acknowledgements}

I thank the referee and editor for their invaluable comments and suggestions. I also thank Dr. Kejia Zhang for the selfless assistance  in writing this article.

\section*{ Funding}

This work is supported in part by National Natural Science Foundation of China [grant number 11171294], Natural Science
Foundation of Heilongjiang Province of China [grant number A2015007], the fund of Heilongjiang Education Committee [grant number 12531483].

\end{document}